  \newfont\fiverm{cmr5} 
\newtheorem{thm}{Theorem}[section]
\newtheorem{exmp}[thm]{Example}
\newtheorem{thm-con}[thm]{Theorem-Conjecture}
\numberwithin{equation}{section}
\theoremstyle{definition}
\newcommand{\f}{\Bbb F}
\newcommand{\X}{\boldsymbol X}
\newcommand{\x}{\boldsymbol x}
\begin{document}

\title[Hasse-Weil Bound and Rational Functions]{An Application of the Hasse-Weil Bound to Rational Functions over Finite Fields}

\author[Xiang-dong Hou]{Xiang-dong Hou}
\address{Department of Mathematics and Statistics,
University of South Florida, Tampa, FL 33620}
\email{xhou@usf.edu}

\author[Annamaria Iezzi]{Annamaria Iezzi}
\address{Department of Mathematics and Statistics,
University of South Florida, Tampa, FL 33620}
\email{aiezzi@usf.edu}


\keywords{Aubry-Perret bound, finite field, Hasse-Weil bound, rational function}

\subjclass[2010]{11T06, 11R58, 14H05, 26C15}

\begin{abstract}
We use the Aubry-Perret bound for singular curves, a generalization of the Hasse-Weil bound, to prove the following curious result about rational functions over finite fields: Let $f(X),g(X)\in\f_q(X)\setminus\{0\}$ be such that $q$ is sufficiently large relative to $\deg f$ and $\deg g$, $f(\f_q)\subset g(\f_q\cup\{\infty\})$, and for ``most'' $a\in\f_q\cup\{\infty\}$, $|\{x\in \f_q:g(x)=g(a)\}|>(\deg g)/2$. Then there exists $h(X)\in\f_q(X)$ such that $f(X)=g(h(X))$. A generalization to multivariate rational functions is also included.

\end{abstract}

\maketitle

\section{Introduction}
Let $\f_q$ denote the finite field with $q$ elements and $\overline \f_q$ be its algebraic closure. For a nonzero rational function $f(X)\in\f_q(X)$ written in the form $f(X)=A(X)/B(X)$, where $A,B\in\f_q[X]$ are such that $\text{gcd}(A,B)=1$, we define $\deg f=\max(\deg A,\deg B)$; when $\deg f>0$, we have $\deg f=[\f_q(X):\f_q(f(X))]$. We write $\f_q^\dagger=\f_q\cup\{\infty\}$, and for $a\in\f_q^\dagger$ and $f\in\f_q(X)$, we define $f|_{\f_q}^{-1}(a)=\{x\in\f_q:f(x)=a\}$.

A polynomial $F\in\f_q[X_1,\dots,X_n]$ is called {\em absolutely irreducible} if it is irreducible in $\overline \f_q[X_1,\dots,X_n]$. 
Let $\Bbb P^2(\f_q)$ denote the projective plane over $\f_q$. For a homogeneous polynomial $F(X,Y,Z)\in\f_q[X,Y,Z]$, define
\[
V_{\Bbb P^2(\f_q)}(F)=\{(x:y:z)\in \Bbb P^2(\f_q):F(x,y,z)=0\}.
\]
Assume that $F(X,Y,Z)\in\f_q[X,Y,Z]$ is an absolutely irreducible homogeneous polynomial of degree $d>0$. When the plane curve $V_{\Bbb P^2(\f_q)}(F)$ is smooth, the Hasse-Weil bound \cite{Stichtenoth-1993, Weil-1948} states that
\begin{equation}\label{1.1}
\bigl||V_{\Bbb P^2(\f_q)}(F)|-(q+1)\bigr|\le 2gq^{1/2},
\end{equation}
where $g$ is the genus of the curve $V_{\Bbb P^2(\f_q)}(F)$. Still assuming the absolute irreducibility of $F(X,Y,Z)$ but without assuming the smoothness of $V_{\Bbb P^2(\f_q)}(F)$, Aubry and Perret \cite{Aubry-Perret} proved that
\begin{equation}\label{AP}
\bigl||V_{\Bbb P^2(\f_q)}(F)|-(q+1)\bigr|\le (d-1)(d-2)q^{1/2}.
\end{equation}

For $F(X,Y)\in\f_q[X,Y]$, let $V_{\f_q^2}(F)=\{(x,y)\in\f_q^2:F(x,y)=0\}$. If $F(X,Y)\in\f_q[X,Y]$ is absolutely irreducible of degree $d>0$, then applying the Aubry-Perret bound to the homogenization of $F(X,Y)$ gives
\begin{equation}\label{1.1.1}
q+1-(d-1)(d-2)q^{1/2}-d\le|V_{\f_q^2}(F)|\le q+1+(d-1)(d-2)q^{1/2}.
\end{equation}
Loosely speaking, if $F(X,Y)\in\f_q[X,Y]$ is absolutely irreducible, then $|V_{\f_q^2}(F)|=q+O(q^{1/2})$ as $q\to\infty$. In contrast, if $F(X,Y)\in\f_q[X,Y]$ is irreducible but not absolutely irreducible, then we have
\begin{equation}\label{1.3}
|V_{\f_q^2}(F)|\le \frac 14(\deg F)^2.
\end{equation}
To see \eqref{1.3}, let $G(X,Y)$ be an irreducible factor of $F$ in $\overline\f_q[X,Y]$ and let $\f_{q^n}$ be the smallest extension of $\f_q$ such that $G(X,Y)\in\f_{q^n}[X,Y]$. Then $F=a\prod_{\sigma\in\text{Aut}(\f_{q^n}/\f_q)}G^\sigma$ for some $a\in\f_q^*$, where $\text{Aut}(\f_{q^n}/\f_q)$ is the Galois group of $\f_{q^n}$ over $\f_q$ and $G^\sigma$ is the polynomial obtained from $G$ by applying $\sigma$ to its coefficients. Since $V_{\f_q^2}(F)=\bigcup_{\sigma\in \text{Aut}(\f_{q^n}/\f_q)}V_{\f_q^2}(G^\sigma)$, where $V_{\f_q^2}(G^\sigma)$ is independent of $\sigma$, we have $V_{\f_q^2}(G^\sigma)=V_{\f_q^2}(F)$ for all $\sigma\in\text{Aut}(\f_{q^n}/\f_q)$. Hence
 $V_{\f_q^2}(F)=\bigcap_{\sigma\in \text{Aut}(\f_{q^n}/\f_q)}V_{\f_q^2}(G^\sigma)\subset\bigcap_{\sigma\in \text{Aut}(\f_{q^n}/\f_q)}V_{\overline\f_q^2}(G^\sigma)$. By B\'ezout's theorem, 
\[
\Bigl|\bigcap_{\sigma\in \text{Aut}(\f_{q^n}/\f_q)}V_{\overline\f_q^2}(G^\sigma)\Bigr|\le(\deg G)^2\le\frac 14(\deg F)^2.
\]

The Hasse-Weil bound and its variations have many applications in the study of polynomial equations over finite fields. In this paper, we use the above observations to prove the following result.

\begin{thm}\label{T1}
Assume that two rational functions $f(X),g(X)\in\f_q(X)\setminus\f_q$ with $\deg f=d$ and $\deg g=\delta$ satisfy the following conditions.
\begin{itemize}
\item[(i)] $f(\f_q)\subset g(\f_q^\dagger)$.
\item[(ii)] For each $a\in\f_q^\dagger$, with at most $8(d+\delta)$ exceptions, $\bigl|g|_{\f_q}^{-1}(g(a))\bigr|>\delta/2$.
\item[(iii)] $q\ge(d+\delta)^4$.
\end{itemize}
Then there exists $h(X)\in\f_q(X)$ such that $f(X)=g(h(X))$.
\end{thm}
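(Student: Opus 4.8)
The plan is to turn everything into a statement about the plane curve $\mathcal C\colon F(X,Y)=0$, where, writing $f=A/B$ and $g=C/D$ with $\gcd(A,B)=\gcd(C,D)=1$, we set $F(X,Y)=C(Y)B(X)-A(X)D(Y)\in\f_q[X,Y]$. Since $f,g$ are nonconstant one checks $\deg_XF=d$, $\deg_YF=\delta$, and that $F$ is squarefree (its factorization in $\overline\f_q(X)[Y]$ is squarefree because $g$ is unramified over the generic point $T=f(X)$); crucially, if $P(X,Y)=B_1(X)Y-A_1(X)$ is any factor of $F$ that is linear in $Y$, then $h:=A_1/B_1\in\f_q(X)$ satisfies $g(h)=f$ (substitute $Y=h(X)$ into $F$). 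So it suffices to produce a factor of $F$ linear in $Y$. The first concrete step manufactures $\f_q$-points on $\mathcal C$: if $x_0\in\f_q$ is not a pole of $f$, then by (i) there is $a\in\f_q^\dagger$ with $g(a)=f(x_0)$, and if $a$ is not among the $\le 8(d+\delta)$ exceptions in (ii) — which rules out at most $8(d+\delta)d$ further $x_0$ — then $\{y\in\f_q:F(x_0,y)=0\}=g|_{\f_q}^{-1}(f(x_0))$ has more than $\delta/2$ elements. Hence $|V_{\f_q^2}(F)|\ge\bigl(q-O((d+\delta)^2)\bigr)(\lfloor\delta/2\rfloor+1)$.

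Next I would pin down the shape of the absolutely irreducible factors. Factor $F=cP_1\cdots P_k$ over $\overline\f_q$ and let $P_1,\dots,P_s$ be those defined over $\f_q$. If $s=0$, then \eqref{1.3} gives $|V_{\f_q^2}(F)|\le\frac14(d+\delta)^2$, which contradicts the point count once $q\ge(d+\delta)^4$; so $s\ge1$. A rational point lying on a conjugate pair $P_j,P_j^\sigma$ of factors not defined over $\f_q$ lies in a set of size $\le(\deg P_j)^2$ by B\'ezout, so all but $O((d+\delta)^2)$ points of $V_{\f_q^2}(F)$ lie on $W:=V_{\overline\f_q^2}(P_1\cdots P_s)$; combining with the point count, for at least one (in fact most) $x_0\in\f_q$ the more‑than‑$\delta/2$ rational points of $\mathcal C$ above $x_0$ are all roots of $\prod_{i\le s}P_i(x_0,Y)$, so that
\[
S:=\textstyle\sum_{i=1}^{s}\deg_YP_i>\delta/2 .
\]
Moreover, on each component $V(P_i)$ the coordinate functions satisfy $f(x)=g(y)$ (as $P_i\mid F$) and are nonconstant, so comparing the degree of the induced map to the $T$-line computed through $x$ and through $y$ gives $d\,\deg_YP_i=\delta\,\deg_XP_i$; in particular $\deg_YP_i=1$ precisely when $V(P_i)$ is the graph of a rational function, which by the opening remark ends the proof.

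So assume for contradiction that $\deg_YP_i\ge2$ for every $i$; then $S\le\deg_YF=\delta$ forces $s\le\lfloor\delta/2\rfloor$. On the other hand, $V_{\f_q^2}(F)=\bigcup_{i\le s}V_{\f_q^2}(P_i)$, so the Aubry–Perret bound \eqref{1.1.1} gives
\[
|V_{\f_q^2}(F)|\le\textstyle\sum_{i=1}^{s}\bigl(q+1+(\deg P_i-1)(\deg P_i-2)q^{1/2}\bigr)\le s(q+1)+(d+\delta)^2q^{1/2},
\]
using $\sum_{i\le s}(\deg P_i-1)(\deg P_i-2)\le\bigl(\sum_{i\le s}(\deg P_i-1)\bigr)^2\le(d+\delta)^2$. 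Comparing this with the lower bound $|V_{\f_q^2}(F)|\ge\bigl(q-O((d+\delta)^2)\bigr)(\lfloor\delta/2\rfloor+1)$ and invoking $q\ge(d+\delta)^4$ to absorb the error terms forces $s$ to be incompatible with $s\le\lfloor\delta/2\rfloor$, except for the extremal configuration $F=cP_1\cdots P_{\delta/2}$ with $\delta$ even and every $\deg_YP_i=2$; in that last case one contradicts the point count once more by summing the Aubry–Perret bound over the hyperelliptic‑type curves $Z^2=\operatorname{disc}_Y(P_i)$ (each of genus $O(d/\delta)$) against the requirement that more than $\delta/4$ of the discriminants be squares at almost every $x_0\in\f_q$. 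I expect this final squeezing of the upper and lower bounds for $s$ past one another — keeping all exceptional counts explicit so that the constants $8(d+\delta)$ and $(d+\delta)^4$ in (ii)–(iii) do their job, and disposing of the even extremal case — to be the main obstacle. Conceptually it is the curve‑theoretic shadow of the group‑theoretic fact that, in the Galois closure of $\f_q(X)/\f_q(f(X))$, hypotheses (i)–(ii) force every Frobenius element attached to a rational place of $\f_q(X)$ to have more than $\delta/2$ fixed points on the $\delta$‑element set of places of $\f_q(Y)$, whence a Burnside count produces a fixed place and embeds $\f_q(Y)$ into $\f_q(X)$ over $\f_q(f(X))$, i.e.\ yields $h$ with $f=g(h)$.
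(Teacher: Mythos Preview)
Your overall strategy --- define $F(X,Y)$, lower-bound $|V_{\f_q^2}(F)|$ from (i)--(ii), factor $F$, assume every factor has $Y$-degree $\ge 2$ so that the number of factors is at most $\lfloor\delta/2\rfloor$, upper-bound $|V_{\f_q^2}(F)|$ via Aubry--Perret on each factor, and compare --- is exactly the paper's. (The paper factors over $\f_q$ rather than $\overline\f_q$, which is cosmetic; your detours through $S>\delta/2$ and the relation $d\,\deg_YP_i=\delta\,\deg_XP_i$ are not needed, since $s\le\lfloor\delta/2\rfloor$ follows directly from $\sum_{i\le s}\deg_YP_i\le\deg_YF=\delta$.)

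The genuine gap is your estimate of the Weil term. You bound $\sum_{i\le s}(\deg P_i-1)(\deg P_i-2)$ by $(d+\delta)^2=d'^2$. With this, the comparison at $s=\lfloor\delta/2\rfloor$ gives only
\[
q - d'^2 q^{1/2} - O(d'^3)\le 0,
\]
whose positive root exceeds $d'^2$; so $q\ge d'^4$ is \emph{not} contradicted. That is precisely why you are left with an ``extremal configuration'' and then reach for a separate discriminant-curve argument. That side argument is both underdeveloped (it is phrased for odd characteristic only, and the requirement ``more than $\delta/4$ of the discriminants are squares at almost every $x_0$'' is only infinitesimally above the heuristic average $\delta/4$, so a Weil-type bound on the curves $Z^2=\mathrm{disc}_YP_i$ will not bite) and, more to the point, unnecessary.

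The fix is to sharpen the Weil term. Since each $\deg P_i\ge 2$ and $\sum_i\deg P_i\le d'$, setting $a_i=\deg P_i-2\ge 0$ gives
\[
\sum_i(\deg P_i-1)(\deg P_i-2)=\sum_i a_i(a_i+1)\le\Bigl(\sum_i a_i\Bigr)\Bigl(\sum_i a_i+1\Bigr)\le(d'-2)(d'-1).
\]
With $(d'-1)(d'-2)$ in place of $d'^2$, the comparison at $s\le\lfloor\delta/2\rfloor$ becomes
\[
q-(d'-1)(d'-2)q^{1/2}-2d'^3-s\le 0,
\]
and solving this quadratic (as the paper does) yields $q^{1/2}<d'^2$, contradicting (iii) outright --- no extremal case, no discriminants.
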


Two special cases of Theorem~\ref{T1}, where $q$ is even and $g(X)=X^2+X$, and where $q$ is odd and $g(X)=X^2$, have appeared in a less explicit form in some recent studies on permutation polynomials \cite{Hou-CC, Hou-Tu-Zeng-ppt}. In fact, Theorem~\ref{T1} is motivated by these two special cases. A generalization of Theorem~\ref{T1} to multivariate rational functions is given in Section 4.

\section{Proof of Theorem~\ref{T1}}

Let $d'=d+\delta$. Write $f(X)=A(X)/B(X)$ and $g(X)=P(X)/Q(X)$, where $A,B,P,Q\in\f_q[X]$, $BQ\ne0$, and $\text{gcd}(A,B)=\text{gcd}(P,Q)=1$. Define
\begin{equation}\label{2.1}
F(X,Y)=A(X)Q(Y)-B(X)P(Y)\in\f_q[X,Y].
\end{equation}
If $F(X,Y)$, viewed as a polynomial in $Y$ over $\f_q(X)$, has a root $h(X)\in\f_q(X)$, then 
\[
A(X)Q(h(X))-B(X)P(h(X))=0,
\]
i.e., $f(X)=A(X)/B(X)=P(h(X))/Q(h(X))=g(h(X))$. (Of course, the converse of this statement is also true.) We will show that under conditions (i) -- (iii), $F(X,Y)\in\f_q(X)[Y]$ has a root in $\f_q(X)$.

We first show that (i) and (ii) imply a lower bound for $|V_{\f_q^2}(F)|$.
Let 
\[
\mathcal Y_1=\{a\in\f_q^\dagger:\bigl|g|_{\f_q}^{-1}(g(a))\bigr|>\delta/2\},\quad \mathcal Y_2=\{a\in\f_q^\dagger:\bigl|g|_{\f_q}^{-1}(g(a))\bigr|\le\delta/2\}.
\]
Note that $V_{\f_q^2}(F)=\{(x,y)\in\f_q\times\f_q:f(x)=g(y)\}$. By (i) and (ii),
\begin{align}\label{2.2}
|V_{\f_q^2}(F)|=\,&|\{(x,y)\in\f_q\times\f_q:f(x)=g(y)\}|\\
=\,&\sum_{x\in\f_q}\bigl|g|_{\f_q}^{-1}(f(x))\bigr|
\ge\sum_{x\in\f_q,\, f(x)\in g(\mathcal Y_1)}\bigl|g|_{\f_q}^{-1}(f(x))\bigr|\cr
\ge\,& \Bigl(\Bigl\lfloor\frac \delta 2\Bigr\rfloor+1\Bigr)|\{x\in\f_q:f(x)\in g(\mathcal Y_1)\}|\cr
\ge\,& \Bigl(\Bigl\lfloor\frac \delta 2\Bigr\rfloor+1\Bigr)(q-|\{x\in\f_q:f(x)\in g(\mathcal Y_2)\}|)\cr
\ge\,& \Bigl(\Bigl\lfloor\frac \delta 2\Bigr\rfloor+1\Bigr)(q-d\,|g(\mathcal Y_2)|)
\ge \Bigl(\Bigl\lfloor\frac \delta 2\Bigr\rfloor+1\Bigr)(q-d\,|\mathcal Y_2|)\cr
\ge\,& \Bigl(\Bigl\lfloor\frac \delta 2\Bigr\rfloor+1\Bigr)(q-8dd')
\ge q\Bigl(\Bigl\lfloor\frac \delta 2\Bigr\rfloor+1\Bigr)-8\delta dd'\cr
\ge\,&  q\Bigl(\Bigl\lfloor\frac \delta 2\Bigr\rfloor+1\Bigr)-2d'^3.
\nonumber
\end{align}

Write $F=p_1\cdots p_m$, where $p_i\in\f_q[X,Y]$ is irreducible with $\deg p_i=d_i$. We claim that $\deg_Yp_i>0$ for all $1\le i\le m$. Otherwise, for some $i$, $p_i(X,Y)=p_i(X)$. Since $P(Y)/Q(Y)$ is not constant, there exist $y_1,y_2\in\overline \f_q$ such that $P(y_1)/Q(y_1)\ne P(y_2)/Q(y_2)$, i.e.,
\[
\left|\begin{matrix} Q(y_1)& P(y_1)\cr Q(y_2) &P(y_2)\end{matrix}\right|\ne 0.
\]
Since $p_i(X)$ divides $F(X,y_1)$ and $F(X,y_2)$, where
\[
\left[\begin{matrix}F(X,y_1)\cr F(X,y_2)\end{matrix}\right]=\left[\begin{matrix} Q(y_1)& P(y_1)\cr Q(y_2) &P(y_2)\end{matrix}\right]\left[\begin{matrix}A(X)\cr B(X)\end{matrix}\right],
\]
we see that $p_i(X)$ divides both $A(X)$ and $B(X)$. This is impossible since $\text{gcd}(A,B)=1$. Hence the claim is proved.

If $\deg_Yp_i=1$ for some $i$, then $F(X,Y)$, as a polynomial in $Y$ over $\f_q(X)$, has a root in $\f_q(X)$, and we are done.

Now assume that $\deg_Yp_i\ge 2$ for all $1\le i\le m$. We will derive a contradiction to (iii). First, we claim that $\deg_YF=\delta$. Otherwise, from \eqref{2.1}, we see that $\deg Q=\deg P$ and $A(X)$ is a scalar multiple of $B(X)$. This is impossible since $\text{gcd}(A(X),B(X))=1$. Now we have
\[
\delta=\deg_YF=\sum_{i=1}^m\deg_Yp_i\ge 2m,
\]
so
\begin{equation}\label{2.3.1}
m\le\Bigl\lfloor\frac \delta 2\Bigr\rfloor.
\end{equation}
If $p_i$ is absolutely irreducible, by \eqref{1.1.1},
\begin{equation}\label{2.4}
|V_{\f_q^2}(p_i)|\le q+1+(d_i-1)(d_i-2)q^{1/2}.
\end{equation}
If $p_i$ is not absolutely irreducible, then by \eqref{1.3},
\begin{equation}\label{2.5}
|V_{\f_q^2}(p_i)|\le\frac 14 d_i^2.
\end{equation}
By \eqref{2.4} and \eqref{2.5}
\begin{equation}\label{2.6}
|V_{\f_q^2}(F)|\le\sum_{i=1}^m |V_{\f_q^2}(p_i)|\le\sum_{i=1}^m\bigl(q+1+(d_i-1)(d_i-2)q^{1/2}\bigr).
\end{equation}
Treating $d_1,\dots,d_m$ as real variables and using Lagrange multipliers, it is easy to see that under the condition $\sum_{i=1}^md_i=\deg F$, the quantity $\sum_{i=1}^m(d_i-1)(d_i-2)$ attains its maximum value when $d_1=\cdots=d_m=(\deg F)/m$. Thus \eqref{2.6}, combined with the fact that $\deg F\le d+\delta=d'$, gives
\begin{equation}\label{2.7}
|V_{\f_q^2}(F)|\le m(q+1)+q^{1/2}m\Bigl(\frac{d'}m-1\Bigr)\Bigl(\frac{d'}m-2\Bigr).
\end{equation}
By \eqref{2.2}, \eqref{2.3.1} and \eqref{2.7}, we have
\[
m(q+1)+q^{1/2}m\Bigl(\frac{d'}m-1\Bigr)\Bigl(\frac{d'}m-2\Bigr)\ge q(m+1)-2d'^3,
\]
i.e.,
\[
q-m\Bigl(\frac{d'}m-1\Bigr)\Bigl(\frac{d'}m-2\Bigr)q^{1/2}-2d'^3-m\le 0.
\]
Hence 
\[
q^{1/2}\le \frac12\Bigl[ m\Bigl(\frac{d'}m-1\Bigr)\Bigl(\frac{d'}m-2\Bigr)
+\Big( m^2\Bigl(\frac{d'}m-1\Bigr)^2\Bigl(\frac{d'}m-2\Bigr)^2+4(2d'^3+m)\Bigr)^{1/2}\Bigr].
\]
In the above, 
\[
m\Bigl(\frac{d'}m-1\Bigr)\Bigl(\frac{d'}m-2\Bigr)\le (d'-1)(d'-2)
\]
and
\begin{align*}
&m^2\Bigl(\frac{d'}m-1\Bigr)^2\Bigl(\frac{d'}m-2\Bigr)^2+4(2d'^3+m)\cr
&< (d'-1)^2(d'-2)^2+8d'^3+2d' \cr
&<(2d'^2-(d'-1)(d'-2))^2.
\end{align*}
Hence 
\[
q^{1/2}<\frac12\bigl[(d'-1)(d'-2)+2d'^2-(d'-1)(d'-2)\bigr]=d'^2,
\]
which is a contradiction to (iii).

\section{Remarks}\label{s3}

In the proof of Theorem~\ref{T1}, conditions (i) and (ii) were only used to derive \eqref{2.2}. In fact, a modification of the conditions in Theorem~\ref{T1} gives the following more convenient form of the theorem.

\begin{thm}\label{T3.1}
Let $f(X),g(X)\in\f_q(X)\setminus\f_q$ be such that $\deg f=d$ and $\deg g=\delta$. If there is a constant $0<\epsilon\le 1$ such that
\begin{equation}\label{3.1}
|\{(x,y)\in\f_q\times\f_q:f(x)=g(y)\}|\ge q\Bigl(\Bigl\lfloor\frac\delta 2\Bigr\rfloor+\epsilon\Bigr),
\end{equation}
and $q\ge(d+\delta)^4/\epsilon^2$, then $f=g\circ h$ for some $h\in\f_q(X)$. 
\end{thm}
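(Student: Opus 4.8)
The plan is to mimic the proof of Theorem~\ref{T1}, replacing the explicit chain of inequalities \eqref{2.2} with the single hypothesis \eqref{3.1}, and tracking the parameter $\epsilon$ through the final estimate. As before, write $f=A/B$ and $g=P/Q$ with $\gcd(A,B)=\gcd(P,Q)=1$, set $d'=d+\delta$, and define $F(X,Y)=A(X)Q(Y)-B(X)P(Y)$. Exactly as in the proof of Theorem~\ref{T1}, a root of $F(X,Y)$ in $\f_q(X)$ (viewed as a polynomial in $Y$) is equivalent to the existence of $h\in\f_q(X)$ with $f=g\circ h$. So I factor $F=p_1\cdots p_m$ into irreducibles over $\f_q[X,Y]$; the arguments there show $\deg_Y p_i>0$ for all $i$ (using that $g$ is nonconstant and $\gcd(A,B)=1$), that $\deg_Y F=\delta$, and that if some $\deg_Y p_i=1$ we are immediately done. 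Hence I may assume $\deg_Y p_i\ge 2$ for all $i$, which gives $\delta=\deg_Y F\ge 2m$, i.e. $m\le\lfloor\delta/2\rfloor$.

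Next I bound $|V_{\f_q^2}(F)|$ from above just as in the original proof: for absolutely irreducible $p_i$ use \eqref{1.1.1}, for non–absolutely–irreducible $p_i$ use \eqref{1.3}, and combine via $|V_{\f_q^2}(F)|\le\sum_i|V_{\f_q^2}(p_i)|$ together with the Lagrange-multiplier convexity remark and $\deg F\le d'$ to obtain
\[
|V_{\f_q^2}(F)|\le m(q+1)+q^{1/2}\,m\Bigl(\frac{d'}m-1\Bigr)\Bigl(\frac{d'}m-2\Bigr).
\]
Now I invoke hypothesis \eqref{3.1} together with $m\le\lfloor\delta/2\rfloor$: since $|V_{\f_q^2}(F)|=|\{(x,y):f(x)=g(y)\}|\ge q(\lfloor\delta/2\rfloor+\epsilon)\ge q(m+\epsilon)$, I get
\[
m(q+1)+q^{1/2}\,m\Bigl(\frac{d'}m-1\Bigr)\Bigl(\frac{d'}m-2\Bigr)\ge q(m+\epsilon),
\]
i.e. $\epsilon q-m\bigl(\tfrac{d'}m-1\bigr)\bigl(\tfrac{d'}m-2\bigr)q^{1/2}-m\le 0$.

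Solving this quadratic inequality in $q^{1/2}$ and bounding $m(\tfrac{d'}m-1)(\tfrac{d'}m-2)\le(d'-1)(d'-2)$ and $m\le\delta\le d'$ yields
\[
\epsilon\,q^{1/2}\le\frac12\Bigl[(d'-1)(d'-2)+\bigl((d'-1)^2(d'-2)^2+4\epsilon d'\bigr)^{1/2}\Bigr].
\]
The routine estimate $(d'-1)^2(d'-2)^2+4\epsilon d'<(2d'^2-(d'-1)(d'-2))^2$ (valid since $\epsilon\le1$ and $d'\ge1$, just as in the original argument, only with $4\epsilon d'\le 4d'$ in place of $8d'^3+2d'$) then gives $\epsilon\, q^{1/2}<d'^2$, hence $q<d'^4/\epsilon^2$, contradicting the hypothesis $q\ge(d+\delta)^4/\epsilon^2$. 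The only mildly delicate point is verifying the last polynomial inequality with the $\epsilon$-factor in place; but since $\epsilon\le1$ the left side is no larger than in the original proof, so the same slack suffices, and there is no real obstacle — the whole argument is a parametrized rerun of the proof of Theorem~\ref{T1}.
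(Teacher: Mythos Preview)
Your proposal is correct and follows exactly the route the paper indicates: replace \eqref{2.2} by the hypothesis \eqref{3.1}, keep the factorization and Aubry--Perret upper bound verbatim, and rerun the quadratic-in-$q^{1/2}$ estimate with the extra $\epsilon$ carried through. The only place $\epsilon$ enters is the leading coefficient of the quadratic and the constant term $m$ (in place of $2d'^3+m$), and your check that $(d'-1)^2(d'-2)^2+4\epsilon d'<(2d'^2-(d'-1)(d'-2))^2$ for $\epsilon\le1$, $d'\ge2$ is exactly the ``routine'' verification the paper leaves to the reader.
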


The proof of Theorem~\ref{T3.1} is almost identical to that of Theorem~\ref{T1}. One replaces \eqref{2.2} with \eqref{3.1} and proceeds accordingly. We leave the details to the reader.

If $f(X),g(X)\in\f_q(X)\setminus\f_q$ are such that
\begin{equation}\label{3.2}
\Bigl|\Bigl\{a\in\f_q:\bigl|g|_{\f_q}^{-1}(g(a))\bigr|\le\frac{\deg g}2\Bigr\}\Bigr|=o(q).
\end{equation}
and 
\begin{equation}\label{3.3}
|\{x\in\f_q:f(x)\notin g(\f_q)\}|=o(q),
\end{equation}
then \eqref{3.1} is satisfied for a suitable $\epsilon>0$ when $q$ is sufficiently large. If $f=g\circ h$ for some $h\in \f_q(X)$, then certainly $f(\f_q^\dagger)\subset g(\f_q^\dagger)$. The reader might wonder why not simply state condition \eqref{3.3} as $f(\f_q^\dagger)\subset g(\f_q^\dagger)$. The reason is that in applications that we anticipate, the latter may not be as easy to prove as the former.

To see that \eqref{3.2} and \eqref{3.3} imply \eqref{3.1}, let $d=\deg f$, $\delta=\deg g$, 
\begin{align*}
\mathcal X\,&=\{x\in\f_q:f(x)\in g(\f_q)\},\cr
\mathcal Y_1&=\{a\in\f_q:\bigl|g|_{\f_q}^{-1}(g(a))\bigr|>\delta/2\},\cr
\mathcal Y_2&=\{a\in\f_q:\bigl|g|_{\f_q}^{-1}(g(a))\bigr|\le\delta/2\}.
\end{align*}
Then 
\begin{align*}
|\{(x,y)\in\f_q\times\f_q:f(x)=g(y)\}|\,
&=\sum_{x\in\f_q}|g^{-1}(f(x))|\ge\sum_{x\in\mathcal X,\,f(x)\in g(\mathcal Y_1)}|g^{-1}(f(x))|\cr
&\ge\Bigl(\left\lfloor\frac \delta 2\right\rfloor+1\Bigr)|\{x\in\mathcal X:f(x)\in g(\mathcal Y_1)\}|.
\end{align*}
In the above
\begin{align*}
|\{x\in\mathcal X:f(x)\in g(\mathcal Y_1)\}|\,&=|\mathcal X|-|\{x\in\mathcal X:f(x)\in g(\mathcal Y_2)\}|\cr
&\ge|\mathcal X|-d\,|g(\mathcal Y_2)|\ge|\mathcal X|-d\,|\mathcal Y_2|=q-o(q).
\end{align*}
Then
\[
|\{(x,y)\in\f_q\times\f_q:f(x)=g(y)\}|\ge\Bigl(\left\lfloor\frac \delta 2\right\rfloor+1\Bigr)(q-o(q)),
\]
and hence \eqref{3.1} is satisfied for some $\epsilon>0$ when $q$ is sufficiently large. 

Let $\mathcal V=\{v\in g(\f_q):\bigl|g|_{\f_q}^{-1}(v)\bigr|\le\delta/2\}$. Then $\mathcal V=g(\mathcal Y_2)$. Since $|g(\mathcal Y_2)|\le|\mathcal Y_2|\le(\delta/2)|g(\mathcal Y_2)|$, we see that $|\mathcal Y_2|=o(q)$ if and only if $|g(\mathcal Y_2)|=o(q)$, that is, \eqref{3.2} holds if and only if $|\mathcal V|=o(q)$. Therefore, rational functions $g\in\f_q(X)$ satisfying \eqref{3.2} are those such that each of the values of $g$ on $\f_q$, with the exception of an $o(q)$ number of them, is attained more that $(\deg g)/2$ times. There are examples of rational functions satisfying \eqref{3.2}.

\begin{exmp}\label{E3.1}\rm
Let $\f_r\subset\f_q$ and let $g(X)=X^r-X$. In this case, $g$ induces an $\f_r$-map $\f_q\to\f_q$ whose kernel is $\f_r$.
\end{exmp}

\begin{exmp}\label{E3.1a}\rm
Example~\ref{E3.1} can be made more general. Let $p=\text{char}\,\f_q$, and for any $\f_p$-subspace $U$ of $\f_q$, let $g(X)=\prod_{u\in U}(X-u)$. In this case, $g$ induces an $\f_p$-map $\f_q\to\f_q$ whose kernel is $U$ (\cite[Theorem~3.52]{Lidl-Niederreiter-FF-1997}). 
\end{exmp}

\begin{exmp}\label{E3.2}\rm
Let $d\mid q-1$ and let $g(X)=X^d$. In this case, $g$ induces a group homomorphism $\f_q^*\to\f_q^*$ whose kernel is of size $d$.
\end{exmp}

Moreover, if $g\in\f_q(X)$ satisfies \eqref{3.2}, then so do $g\circ\phi$ and $\phi\circ g$ for any $\phi\in\f_q(X)$ with $\deg \phi=1$. Are there other examples? What more can we say about rational functions satisfying \eqref{3.2}? These appear to be interesting questions in their own right.

\section{Generalization to Multivariate Rational Functions}

The Hasse-Weil bound has a generalization for absolutely irreducible polynomials in $n$ variables over $\f_q$, which is the Lang-Weil bound \cite{Cafure-Matera-FFA-2006, Lang-Weil-AJM-1954}:

\begin{thm}[Lang and Weil \cite{Lang-Weil-AJM-1954}]\label{T-LW}
Let $F(X_1,\dots,X_n)\in\f_q[X_1,\dots,X_n]$ be absolutely irreducible of degree $d$. Then 
\[
\bigl||V_{\f_q^n}(F)|-q^{n-1}\bigr|\le (d-1)(d-2)q^{n-3/2}+c(n,d)q^{n-2},
\]
where $c(n,d)$ is a constant depending only on $n$ and $d$.
\end{thm}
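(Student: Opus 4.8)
\emph{Proof sketch.} The plan is to deduce Theorem~\ref{T-LW} from the Aubry--Perret bound \eqref{1.1.1} by induction on the number of variables $n$, reducing the dimension one step at a time by slicing the hypersurface $V_{\f_q^n}(F)$ with a pencil of parallel hyperplanes; this is the classical strategy of Lang and Weil \cite{Lang-Weil-AJM-1954}, made effective by Cafure and Matera \cite{Cafure-Matera-FFA-2006}. The low-dimensional cases are immediate: if $d=1$ then $F$ is affine-linear and $|V_{\f_q^n}(F)|=q^{n-1}$ exactly; if $n=1$ then necessarily $d=1$; and if $n=2$ then \eqref{1.1.1} already gives $\bigl||V_{\f_q^2}(F)|-q\bigr|\le(d-1)(d-2)q^{1/2}+d$, which is the asserted estimate for a suitable $c(2,d)$. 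So assume $n\ge 3$, $d\ge 2$, and that the theorem is known in fewer variables.

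First I would reduce to the case $\deg_{X_n}F>0$: if $F\in\f_q[X_1,\dots,X_{n-1}]$, then $V_{\f_q^n}(F)=V_{\f_q^{n-1}}(F)\times\f_q$ and the estimate is immediate from the inductive hypothesis. An invertible $\f_q$-linear change of $X_1,\dots,X_n$ --- which alters neither $|V_{\f_q^n}(F)|$ nor $d$ nor the absolute irreducibility of $F$ --- puts $F$ in general position with respect to the projection $(x_1,\dots,x_n)\mapsto x_n$; moreover, $F$ being absolutely irreducible of degree $\ge 2$ has no linear factor, so $F_t(X_1,\dots,X_{n-1}):=F(X_1,\dots,X_{n-1},t)$ is a nonzero polynomial for every $t\in\f_q$, and
\[
|V_{\f_q^n}(F)|=\sum_{t\in\f_q}|V_{\f_q^{n-1}}(F_t)|.
\]
The crux is an effective Bertini-type irreducibility statement: there is a finite set $\mathcal B\subset\overline\f_q$, of size at most a constant $c_1(n,d)$ depending only on $n$ and $d$, such that for every $t\in\f_q\setminus\mathcal B$ the polynomial $F_t$ is absolutely irreducible of degree $d_t$ with $1\le d_t\le d$. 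This is exactly what the effective Bertini theorems of Cafure--Matera \cite{Cafure-Matera-FFA-2006} provide, and it holds because $\dim V_{\overline\f_q^{\,n}}(F)=n-1\ge 2$. For the good values $t\in\f_q\setminus\mathcal B$, using $(d_t-1)(d_t-2)\le(d-1)(d-2)$ (valid as $1\le d_t\le d$) and the inductive hypothesis,
\[
\bigl||V_{\f_q^{n-1}}(F_t)|-q^{n-2}\bigr|\le(d-1)(d-2)q^{n-5/2}+c(n-1,d)q^{n-3};
\]
summing over the at least $q-c_1(n,d)$ good values produces $q^{n-1}+(d-1)(d-2)q^{n-3/2}$ up to an accumulated error of order $q^{n-2}$. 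For each bad value $t\in\mathcal B\cap\f_q$ the elementary bound $|V_{\f_q^{n-1}}(h)|\le(\deg h)q^{n-2}$ for nonzero $h$ gives $|V_{\f_q^{n-1}}(F_t)|\le d\,q^{n-2}$, so the bad values contribute $O(q^{n-2})$ in total. Collecting the two sums yields $\bigl||V_{\f_q^n}(F)|-q^{n-1}\bigr|\le(d-1)(d-2)q^{n-3/2}+c(n,d)q^{n-2}$ for a suitable $c(n,d)$, which closes the induction.

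The main obstacle is the effective Bertini step --- showing that after a generic linear change of coordinates all but $O(1)$ (in terms of $n$ and $d$) of the hyperplanes $X_n=t$ cut $V_{\overline\f_q^{\,n}}(F)$ in an absolutely irreducible variety of the expected dimension $n-2$. Two points need care. Geometrically, Bertini's irreducibility theorem (applicable since $\dim V_{\overline\f_q^{\,n}}(F)\ge 2$) only asserts that the \emph{generic} member of a generic pencil is irreducible; converting this into an explicit bound on the number of exceptional $t$ amounts to realizing the exceptional locus as the zero set of an auxiliary system of resultants and discriminants of degrees controlled by $n$ and $d$ --- standard but technically involved elimination theory, and the source of the precise dependence of $c(n,d)$ on $n$ and $d$. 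Arithmetically, one must ensure that the surviving fibers are absolutely irreducible, not merely $\f_q$-irreducible, and either take the change of coordinates over $\f_q$ (possible once $q$ is large) or run the geometric argument over $\overline\f_q$ and note that $\mathcal B$, being defined geometrically, is unaffected. Alternatively, one could bypass the elementary argument entirely and appeal to the Grothendieck--Lefschetz trace formula together with Deligne's bounds, obtaining an estimate of the same shape with cohomologically defined constants; the inductive reduction above is, however, closer in spirit to the rest of this paper.
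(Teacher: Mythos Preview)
The paper does not prove Theorem~\ref{T-LW}; it is quoted as a classical result of Lang and Weil \cite{Lang-Weil-AJM-1954}, and the explicit form of Theorem~\ref{T-CM} (Cafure--Matera) is then invoked directly in the proof of Theorem~\ref{T4.1}. So there is no ``paper's own proof'' to compare against.

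That said, your sketch is a faithful outline of the classical Lang--Weil argument and is essentially correct. The induction on $n$, with base case $n=2$ supplied by \eqref{1.1.1}, the hyperplane slicing $F_t=F(X_1,\dots,X_{n-1},t)$, the elementary bound $|V_{\f_q^{n-1}}(F_t)|\le d\,q^{n-2}$ for the exceptional fibers, and the summation of the inductive estimate over the good fibers are all sound. You have correctly isolated the only nontrivial ingredient: an effective Bertini statement guaranteeing that, after a suitable $\f_q$-linear change of coordinates, the number of $t\in\f_q$ for which $F_t$ fails to be absolutely irreducible is bounded by a constant depending only on $n$ and $d$. This is indeed where all the work lies, and you appropriately defer to \cite{Cafure-Matera-FFA-2006} for it; without that citation the sketch would have a genuine gap, since classical Bertini gives only a generic statement and converting it to an $O_{n,d}(1)$ bound on bad fibers requires the elimination-theoretic arguments you allude to. One small point: rather than asserting $F_t\ne 0$ for \emph{every} $t$, it is cleaner (and sufficient) to absorb any $t$ with $F_t$ constant into the exceptional set $\mathcal B$, since after a generic coordinate change the leading form of $F$ in $X_n$ is a nonzero constant and $\deg F_t=d$ for all $t$.
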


Cafure and Matera \cite{Cafure-Matera-FFA-2006} provided an explicit expression for the constant $c(n,d)$:

\begin{thm}[Cafure and Matera \cite{Cafure-Matera-FFA-2006}]\label{T-CM}
Let $F(X_1,\dots,X_n)\in\f_q[X_1,\dots,X_n]$ be absolutely irreducible of degree $d$. Then 
\[
\bigl||V_{\f_q^n}(F)|-q^{n-1}\bigr|\le (d-1)(d-2)q^{n-3/2}+5d^{13/3}q^{n-2}.
\]
\end{thm}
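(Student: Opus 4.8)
The plan is to prove the bound by induction on the number of variables $n$, reducing a hypersurface in $n$ variables to one in $n-1$ variables at each step and bottoming out at the case of plane curves, where the Aubry--Perret bound \eqref{AP} (as packaged in \eqref{1.1.1}) is already available. Since the plane curves that appear at the bottom of this iteration are generic hyperplane sections of higher-dimensional hypersurfaces and are therefore in general singular, it is essential that \eqref{1.1.1} is the Aubry--Perret estimate, valid with no smoothness hypothesis. The base case $n=2$ is immediate: there $q^{n-2}=1$, so the asserted inequality reads $\bigl||V_{\f_q^2}(F)|-q\bigr|\le(d-1)(d-2)q^{1/2}+5d^{13/3}$, which follows from \eqref{1.1.1} because the additive discrepancies there (namely $1$ above and $d$ below) are absorbed by $5d^{13/3}$ for every $d\ge1$.

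For the inductive step $n\ge3$ I would fix an absolutely irreducible $F$ of degree $d$ and introduce the family of affine hyperplanes $H_{\boldsymbol t}:\ X_n=t_0+t_1X_1+\cdots+t_{n-1}X_{n-1}$, parametrized by $\boldsymbol t=(t_0,\dots,t_{n-1})\in\f_q^n$. An effective Bertini-type theorem --- this is where the explicit estimates for absolute irreducibility developed in \cite{Cafure-Matera-FFA-2006} enter --- guarantees that for $\boldsymbol t$ outside an exceptional set $\mathcal B$ the polynomial obtained from $F$ by substituting the affine form for $X_n$ is again absolutely irreducible of degree $d$ in $n-1$ variables, and that $\mathcal B$ is contained in a hypersurface of $\f_q^n$ whose degree is bounded by a polynomial in $d$ alone (so $|\mathcal B|$ is at most that polynomial times $q^{n-1}$). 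One then runs the count through the double-counting identity
\[
\sum_{\boldsymbol t\in\f_q^n}\bigl|V_{\f_q^n}(F)\cap H_{\boldsymbol t}\bigr|=q^{n-1}\bigl|V_{\f_q^n}(F)\bigr|,
\]
which holds because each point of $\f_q^n$ lies on exactly $q^{n-1}$ of the hyperplanes $H_{\boldsymbol t}$. For $\boldsymbol t\notin\mathcal B$ the slice is the zero set of an absolutely irreducible polynomial in $n-1$ variables, so the inductive hypothesis bounds it from both sides; for $\boldsymbol t\in\mathcal B$ one falls back on a crude estimate coming from B\'ezout's theorem (the slice has degree $\le d$) and the trivial bound on the number of rational points of a variety of given dimension and degree. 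Dividing the resulting inequality by $q^{n-1}$ gives the upper bound, and the lower bound follows the same way, using the lower half of \eqref{1.1.1} on the non-exceptional slices and simply discarding the exceptional ones.

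The step I expect to be the main obstacle --- and the reason the constant has the peculiar shape $5d^{13/3}$ --- is the error accounting for the exceptional parameters. There are two separate difficulties. First, one needs an \emph{explicit} polynomial bound for the degree of the bad locus $\mathcal B$; this is exactly the effective irreducibility/Bertini input, and the exponent $13/3$ is in the end an optimization artifact of the sharpest such bounds. Second, and more delicate, a naive ``dimension $n-1$'' estimate for the exceptional slices is hopelessly lossy --- it would dwarf even the main term $q^{n-1}$ --- so one must stratify the parameter space $\f_q^n$ by the dimension of $V_{\overline\f_q^n}(F)\cap H_{\boldsymbol t}$, show that the locus where this intersection fails to have the expected dimension $n-2$ is cut out by equations of controlled degree and hence lies in correspondingly high codimension, and bound the contribution of each stratum so that all the pieces aggregate to $O(d^{13/3}q^{n-2})$. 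Making these effective degree estimates interlock, and checking that the base-case slack together with the aggregated exceptional contribution stays below $5d^{13/3}$ uniformly in $n$ and $q$, is the technical core; the remaining ingredients (the double count, the admissible linear changes of coordinates, the B\'ezout bounds) are routine.
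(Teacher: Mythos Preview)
The paper does not prove this statement at all: Theorem~\ref{T-CM} is quoted from \cite{Cafure-Matera-FFA-2006} and used as a black box in the proof of Theorem~\ref{T4.1}. There is therefore no ``paper's own proof'' to compare your proposal against; the authors simply cite the result.

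That said, your outline is broadly in the spirit of the actual argument in \cite{Cafure-Matera-FFA-2006}: the proof there does proceed by an effective Bertini-type reduction to lower dimension, with explicit control on the locus of bad hyperplane sections, and the constant $5d^{13/3}$ does arise from optimizing those effective irreducibility bounds. Your identification of the two genuine difficulties --- an explicit degree bound for the bad locus, and a stratification by the dimension of the slice to avoid the naive estimate swamping the main term --- is accurate. However, as written your proposal is only a plan, not a proof: the effective Bertini input and the stratification argument are precisely the nontrivial content of \cite{Cafure-Matera-FFA-2006}, and you have not supplied them. If the intent was to reproduce the Cafure--Matera theorem from scratch, substantial work remains; if the intent was merely to match what the present paper does, then no proof is needed --- a citation suffices.
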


In this section, we generalize Theorem~\ref{3.1} to multivariate rational functions using Theorem~\ref{T-CM}. For convenience, we write $\X=(X_1,\dots,X_n)$. For $f(\X)\in\f_q(\X)$ written in the form $f(\X)=A(\X)/B(\X)$, where $\text{gcd}(A,B)=1$, and for $\x\in\f_q^n$, we say that $f$ is defined at $\x$ if $A(\x)$ and $B(\x)$ are not both $0$; in this case, $f(\x)\in\f_q\cup\{\infty\}$.

\begin{thm}\label{T4.1}
Let $f(\X)\in\f_q(\X)\setminus\f_q$ and $g(X)\in\f_q(X)\setminus\f_q$ be such that $\deg f=d$ and $\deg g=\delta$. If there is a constant $0<\epsilon\le 1$ such that
\begin{equation}\label{4.1}
|\{(\x,y)\in\f_q^n\times\f_q:\text{$f$ is defined at $\x$ and $f(\x)=g(y)$}\}|\ge q^n\Bigl(\Bigl\lfloor\frac\delta 2\Bigr\rfloor+\epsilon\Bigr),
\end{equation}
and $q\ge7.8(d+\delta)^{13/3}/\epsilon^2$, then $f=g\circ h$ for some $h\in\f_q(\X)$. 
\end{thm}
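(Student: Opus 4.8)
The plan is to imitate the proof of Theorem~\ref{T1} (equivalently Theorem~\ref{T3.1}), replacing the Aubry--Perret bound in two variables by the Cafure--Matera form of the Lang--Weil bound in $n+1$ variables. Write $f(\X)=A(\X)/B(\X)$ and $g(X)=P(X)/Q(X)$ with $\gcd(A,B)=\gcd(P,Q)=1$, $BQ\ne 0$, and set
\[
F(\X,Y)=A(\X)Q(Y)-B(\X)P(Y)\in\f_q[\X,Y].
\]
Exactly as before, the existence of a root $h(\X)\in\f_q(\X)$ of $F$ viewed as a polynomial in $Y$ over $\f_q(\X)$ is equivalent to $f=g\circ h$. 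So I would factor $F=p_1\cdots p_m$ into irreducibles in $\f_q[\X,Y]$, argue as in Section~2 that $\deg_Y p_i>0$ for every $i$ (if some $p_i$ were free of $Y$, it would divide both $A$ and $B$, contradicting $\gcd(A,B)=1$), and that $\deg_Y F=\delta$ (otherwise $\deg P=\deg Q$ and $A$ is a scalar multiple of $B$). If some $p_i$ has $\deg_Y p_i=1$ we are done; otherwise $\deg_Y p_i\ge 2$ for all $i$, hence $m\le\lfloor\delta/2\rfloor$, and I aim for a contradiction with the hypothesis $q\ge 7.8(d+\delta)^{13/3}/\epsilon^2$.

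For the upper bound on $|V_{\f_q^{n+1}}(F)|$, note $V_{\f_q^{n+1}}(F)=\bigcup_i V_{\f_q^{n+1}}(p_i)$, so $|V_{\f_q^{n+1}}(F)|\le\sum_i|V_{\f_q^{n+1}}(p_i)|$. For each absolutely irreducible factor $p_i$ of degree $d_i$, Theorem~\ref{T-CM} gives
\[
|V_{\f_q^{n+1}}(p_i)|\le q^n+(d_i-1)(d_i-2)q^{n-1/2}+5d_i^{13/3}q^{n-1}.
\]
For the factors $p_i$ that are irreducible over $\f_q$ but not absolutely irreducible, I need the $n$-variable analogue of \eqref{1.3}: the same Galois-conjugation argument shows $V_{\f_q^{n+1}}(p_i)\subseteq\bigcap_{\sigma}V_{\overline\f_q^{n+1}}(p_i^\sigma)$, and since two coprime absolutely irreducible polynomials define a variety of dimension $\le n-1$ cut out by a proper subvariety, a crude B\'ezout-type bound gives $|V_{\f_q^{n+1}}(p_i)|\le d_i^2 q^{n-1}$ (any bound of the shape $C d_i^2 q^{n-1}$ with a harmless constant suffices). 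Summing, bounding $\sum d_i=\deg F\le d+\delta=:d'$, using $m\le\lfloor\delta/2\rfloor$ and (as in Section~2, via Lagrange multipliers) that $\sum_i(d_i-1)(d_i-2)$ and $\sum_i d_i^{13/3}$ are maximized at $d_1=\cdots=d_m=d'/m$, I get
\[
|V_{\f_q^{n+1}}(F)|\le m q^n+ m\Bigl(\tfrac{d'}m-1\Bigr)\Bigl(\tfrac{d'}m-2\Bigr)q^{n-1/2}+5m\Bigl(\tfrac{d'}m\Bigr)^{13/3}q^{n-1}.
\]
Comparing with the lower bound \eqref{4.1}, which after $m\le\lfloor\delta/2\rfloor$ reads $|V_{\f_q^{n+1}}(F)|\ge(m+\epsilon)q^n$, and dividing by $q^{n-1}$, yields a quadratic inequality in $q^{1/2}$ of the form
\[
\epsilon q-m\Bigl(\tfrac{d'}m-1\Bigr)\Bigl(\tfrac{d'}m-2\Bigr)q^{1/2}-5m\Bigl(\tfrac{d'}m\Bigr)^{13/3}\le 0,
\]
whose resolution gives an upper bound for $q$ that contradicts $q\ge 7.8(d+\delta)^{13/3}/\epsilon^2$.

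I expect the only real obstacle to be the bookkeeping in that last quadratic estimate: one must check that $m(d'/m-1)(d'/m-2)\le(d'-1)(d'-2)$ and $5m(d'/m)^{13/3}\le 5d'^{13/3}$ (both from $1\le m\le d'$), then verify that the positive root of $\epsilon q-(d'-1)(d'-2)q^{1/2}-5d'^{13/3}\le0$ is strictly below $7.8 d'^{13/3}/\epsilon^2$. Since $\epsilon\le 1$ and $d'\ge 2$, one has $(d'-1)(d'-2)\le d'^2\le d'^{13/3}$, so $q^{1/2}\le\frac1{2\epsilon}\bigl[(d'-1)(d'-2)+((d'-1)^2(d'-2)^2+20\epsilon d'^{13/3})^{1/2}\bigr]< \frac1\epsilon(2.8\, d'^{13/6})$ after a routine majorization, giving $q<7.8 d'^{13/3}/\epsilon^2$ — the precise constant $7.8$ is exactly what makes this go through. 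A secondary point worth a sentence is justifying the non-absolutely-irreducible bound $|V_{\f_q^{n+1}}(p_i)|\le d_i^2 q^{n-1}$: the intersection of the distinct Galois conjugates $p_i^\sigma$, which are pairwise coprime since $p_i$ is already irreducible over $\f_q$, lies in a hypersurface of one of them intersected with a proper closed subset, so its $\f_q$-points are $O(q^{n-1})$ with an explicit constant from B\'ezout; this term is dominated by the Lang--Weil error term anyway and does not affect the final constant.
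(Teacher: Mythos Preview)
Your proposal is correct and follows essentially the same route as the paper's proof: the same auxiliary polynomial $F(\X,Y)$, the same factorization and $\deg_Y$ argument, the Cafure--Matera bound for absolutely irreducible factors, a $O(d_i^2 q^{n-1})$ bound for the others (the paper cites \cite[Lemma~2.3]{Cafure-Matera-FFA-2006} for this rather than sketching the Galois/B\'ezout argument), and the same quadratic inequality in $q^{1/2}$ yielding the constant $\bigl(\tfrac{1+\sqrt{21}}{2}\bigr)^2<7.8$.
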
 

\begin{proof}
Write $f(\X)=A(\X)/B(\X)$ and $g(X)=P(X)/Q(X)$, where $A(\X),B(\X)\in\f_q[\X]$, $B\ne 0$, $\text{gcd}(A,B)=1$, and $P(X),Q(X)\in\f_q[X]$, $Q\ne 0$, $\text{gcd}(P,Q)=1$. Let 
\[
F(\X,Y)=A(\X)Q(Y)-B(\X)P(Y)\in\f_q[\X,Y].
\]
By \eqref{4.1},
\begin{equation}\label{4.2}
|V_{\f_q^{n+1}}(F)|\ge q^n\Bigl(\Bigl\lfloor\frac\delta 2\Bigr\rfloor+\epsilon\Bigr).
\end{equation}
Write $F=p_1,\cdots p_m$, where $p_i\in\f_q[\X,Y]$ is irreducible with $\deg p_i=d_i$.

We first claim that $\deg_Yp_i>0$ for all $1\le i\le n$. The proof of this claim is identical to that of the univariate case; see the paragraph in Section 2 after \eqref{2.2}.

If $\deg_Yp_i=1$ for some $i$, say $p_i=\alpha(\X)Y+\beta(\X)$, where $\alpha(\X),\beta(\X)\in\f_q(\X)$ and $\alpha(\X)\ne 0$. Let $h(\X)=-\beta(\X)/\alpha(\X)$. Then 
\[
0=F(\X,h(\X))=A(\X)Q(h(\X))-B(\X)P(h(\X)),
\]
i.e., $f(\X)=g(h(\X))$, and we are done.

Now assume that $\deg_Yp_i\ge 2$ for all $1\le i\le m$. We will derive a contradiction. If $p_i$ is absolutely irreducible, by Theorem~\ref{T-CM},
\[
|V_{\f_q^{n+1}}(p_i)|\le q^n+(d_i-1)(d_i-2)q^{n-1/2}+5d_i^{13/3}q^{n-1}.
\]
If $p_i$ is not absolutely irreducible, by \cite[Lemma~2.3]{Cafure-Matera-FFA-2006},
\[
|V_{\f_q^{n+1}}(p_i)|\le\frac 14d_i^2q^{n-1}.
\]
Hence
\begin{equation}\label{4.3}
|V_{\f_q^{n+1}}(F)|\le\sum_{i=1}^m|V_{\f_q^{n+1}}(p_i)|\le\sum_{i=1}^m\bigl(q^n+(d_i-1)(d_i-2)q^{n-1/2}+5d_i^{13/3}q^{n-1}\bigr).
\end{equation}
Treating $d_1,\dots,d_m$ as real variables and using Lagrange multipliers, we see that under the condition $\sum_{i=1}^md_i=\deg F$, both $\sum_{i=1}^m(d_i-1)(d_i-2)$ and $\sum_{i=1}^md_i^{13/3}$ attain their maximum values when $d_1=\cdots=d_m=(\deg F)/m$. Let $d'=d+\delta$ and note that $\deg F\le d'$. Now by \eqref{4.3},
\begin{equation}\label{4.4}
|V_{\f_q^{n+1}}(F)|\le mq^n+m\Bigl(\frac{d'}m-1\Bigr)\Bigl(\frac{d'}m-2\Bigr)q^{n-1/2}+5m\Bigl(\frac{d'}m\Bigr)^{13/3}q^{n-1}.
\end{equation}
As seen in Section 2, we have $\deg_YF=\delta$. Hence
\[
\delta=\deg_YF=\sum_{i=1}^m\deg_Yp_i\ge 2m,
\]
so
\[
m\le\left\lfloor\frac\delta 2\right\rfloor.
\]
Thus by \eqref{4.2},
\begin{equation}\label{4.5}
|V_{\f_q^{n+1}}(F)|\ge q^n(m+\epsilon).
\end{equation}
Combining \eqref{4.4} and \eqref{4.5} gives
\[
q^n(m+\epsilon)\le mq^n+m\Bigl(\frac{d'}m-1\Bigr)\Bigl(\frac{d'}m-2\Bigr)q^{n-1/2}+5m^{-10/3}{d'}^{13/3}q^{n-1},
\]
i.e.,
\[
\epsilon q-m\Bigl(\frac{d'}m-1\Bigr)\Bigl(\frac{d'}m-2\Bigr)q^{1/2}-5m^{-10/3}{d'}^{13/3}\le 0.
\]
Hence
\begin{align*}
&q^{1/2}\cr
&\le\frac 1{2\epsilon}\Bigl[m\Bigl(\frac{d'}m-1\Bigr)\Bigl(\frac{d'}m-2\Bigr)+\Bigl(m^2\Bigl(\frac{d'}m-1\Bigr)^2\Bigl(\frac{d'}m-2\Bigr)^2+20\epsilon m^{-10/3}{d'}^{13/3}\Bigr)^{1/2}\Bigr]\cr
&<\frac 1{2\epsilon}\bigl({d'}^2+({d'}^4+20{d'}^{13/3})^{1/2}\bigr)<\frac 1{2\epsilon}\bigl({d'}^2+\sqrt{21}{d'}^{13/6}\bigr)<\frac {1+\sqrt{21}}{2\epsilon}{d'}^{13/6}.
\end{align*}
Hence 
\[
q<\Bigl(\frac {1+\sqrt{21}}2\Bigr)^2\,\frac{{d'}^{13/3}}{\epsilon^2}<7.8\,\frac{{d'}^{13/3}}{\epsilon^2},
\]
which is a contradiction.
\end{proof}

If $f(\X)\in\f_q(\X)\setminus\f_q$ and $g(X)\in\f_q(X)\setminus\f_q$ are such that 
\[
\Bigl|\Bigl\{a\in\f_q:\bigl|g|_{\f_q}^{-1}(g(a))\bigr|\le\frac{\deg g}2\Bigr\}\Bigr|=o(q)
\]
and
\[
|\{\boldsymbol{x}\in\f_q^n:f(\boldsymbol{x})\notin g(\f_q)\}|=o(q^n),
\]
then \eqref{4.1} is satisfied for a suitable $\epsilon>0$ when $q$ is sufficiently large. The proof of this claim is identical to that of the univariate case given in Section~\ref{s3}.




\begin{thebibliography}{99}

\bibitem{Aubry-Perret}
Y. Aubry and M. Perret, {\it A Weil theorem for singular curves}, In Arithmetic, Geometry and Coding Theory (Luminy, 1993), pp. 1 -- 7, de Gruyter, Berlin, 1996.

\bibitem{Cafure-Matera-FFA-2006}
A. Cafure and G. Matera, {\it Improved explicit estimates on the number of solutions of equations over a finite field}, Finite Fields Appl.  {\bf 12} (2006), 155 -- 185.



\bibitem{Hou-CC}
X. Hou, {\it On a class of permutation trinomials in characteristic 2}, Cryptography and Communications, published online December 7, 2018.

\bibitem{Hou-Tu-Zeng-ppt}
X. Hou, Z. Tu, X. Zeng, {\it Determination of a class of permutation trinomials in characteristic three}, arXiv:1811.11949.

\bibitem{Lang-Weil-AJM-1954}
S. Lang and A. Weil, {\it Number of points of varieties in finite fields}, 
Amer. J. Math.  {\bf 76} (1954), 819 -- 827. 

\bibitem{Lidl-Niederreiter-FF-1997}
R. Lidl and H. Niederreiter, {\it Finite fields},  Cambridge University Press, Cambridge, 1997.



\bibitem{Stichtenoth-1993}
H. Stichtenoth, {\it Algebraic Function Fields and Codes},  Springer, Berlin, 1993.

\bibitem{Weil-1948}
A. Weil, {\it Sur les Courbes Alg\'ebriques et les Vari\'et\'es qui s'en D\'eduisent},  Hermann et Cie., Paris, 1948.

\end{thebibliography}
\end{document}